\theoremstyle{plain}
\newtheorem{thm}{Theorem}[section]
\newtheorem{prop}[thm]{Proposition}
\newtheorem{cor}[thm]{Corollary}
\theoremstyle{remark}
\newtheorem*{remark}{Remark}
\title{Pseudofree Group Actions on Spheres}
\author{Allan L. Edmonds}
\address{Department of Mathematics, Indiana University, Bloomington, IN 47405}
\email{edmonds@indiana.edu}
\begin{document}
\begin{abstract}
R.~S.~Kulkarni showed that a finite group acting pseudofreely, but not freely, preserving orientation, on an even-dimensional sphere (or suitable sphere-like space) is either a periodic group acting semifreely with two fixed points, a dihedral group acting with three singular orbits, or one of the polyhedral groups, occurring only in dimension 2.  It is shown here that the dihedral group does not act pseudofreely and locally linearly on an actual $n$-sphere when $n\equiv 0\mod 4$. The possibility of such an action when $n\equiv 2\mod 4$ and $n>2$ remains open. Orientation-reversing actions are also considered.
\end{abstract}
\dedicatory{Dedicated to Jos\'e Mar\'ia Montesinos on the occasion of his 65th birthday}
\maketitle

\section{Introduction}

The focus of this note is the question of what finite groups can act pseudofreely (but not freely) on some sphere. Recall that a group action is pseudofree if the fixed point set of each non-identity element is discrete.

A good part of this question was already answered by R.~S.~Kulkarni for pseudofree actions on (cohomology) manifolds with the homology of a sphere. But there were left open questions about existence of certain actions on actual spheres.  We quote from Kulkarni. (For consistency with the rest of this paper, we have made minor alterations in the notation.)

\begin{thm}[Kulkarni  \cite{Kulkarni1982}]
Let $X$ be an admissible space which is a $d$-dimensional $\mathbb{Z}$-cohomology manifold with the mod $2$ cohomology isomorphic to that of an even dimensional sphere $S^{n}$. Let $G$ be a finite group acting pseudofreely on $X$ and trivially on $H^{*}(X;\mathbb{Q})$. The either
\begin{enumerate}[a.]
\item $G$ acts semifreely with two fixed points and has periodic cohomology of period $d$, or
\item $G\approx $ a dihedral group of order $2k$, $k$ odd or
\item $n=2$ and $G\approx $ a dihedral, tetrahedral, octahedral or icosahedral group.
\end{enumerate}
\end{thm}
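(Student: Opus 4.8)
The statement is Kulkarni's classification, and the natural plan is: convert the Euler characteristic into a Riemann--Hurwitz-type equation, use Smith theory for the local behaviour at singular points, solve the resulting Diophantine equation, and finish with Borel's formula. Since $n$ is even, $\chi(X)=1+(-1)^{n}=2$. Pseudofreeness makes the singular set $\Sigma=\{x\in X:G_{x}\neq 1\}$ a finite union of orbits $\mathcal O_{1},\dots,\mathcal O_{r}$; set $n_{i}=|G_{x}|$ for $x\in\mathcal O_{i}$, so $n_{i}\geq 2$ and $|\mathcal O_{i}|=|G|/n_{i}$. As $G$ acts trivially on $H^{*}(X;\mathbb Q)$, the transfer gives $H^{*}(X/G;\mathbb Q)\cong H^{*}(X;\mathbb Q)^{G}=H^{*}(X;\mathbb Q)\cong H^{*}(S^{n};\mathbb Q)$, so $\chi(X/G)=2$. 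Since $X\setminus\Sigma\to (X/G)\setminus\overline{\Sigma}$ is a $|G|$-fold covering and the Euler characteristic is additive over the partition of $X$ into $\Sigma$ and its complement, $\chi(X)-\sum_{i}|G|/n_{i}=|G|\bigl(\chi(X/G)-r\bigr)$, that is,
\[
\sum_{i=1}^{r}\Bigl(1-\frac1{n_{i}}\Bigr)=2-\frac2{|G|}.
\]

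For the local structure: if $x\in\Sigma$, every non-identity element of $G_{x}$ fixes only $x$ nearby, so $G_{x}$ acts freely on the link of $x$ --- which, $X$ being a $\mathbb Z$-cohomology manifold, is a $\mathbb Z$-cohomology $(n-1)$-sphere. Hence $G_{x}$ has periodic cohomology with $n$ as a period, so every abelian subgroup of $G_{x}$ is cyclic and $G_{x}$ contains no $(\mathbb Z/2)^{2}$. Also, for $g\in G$ of order $2$, Smith theory makes $\mathrm{Fix}(g)$ a mod $2$ cohomology sphere, which being discrete is empty or a pair of points. Now solve the equation: each summand is $\geq\frac12$ while the right side is $<2$, so $r\leq 3$; a non-trivial $G$ cannot have $r\leq 1$ ($r=0$ forces $|G|=1$, and $r=1$ forces $1/n_{1}\leq 0$), so $r\in\{2,3\}$ --- in particular a non-trivial pseudofree action is automatically non-free. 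If $r=2$, then $1/n_{1}+1/n_{2}=2/|G|$ with $n_{i}\leq|G|$ forces $n_{1}=n_{2}=|G|$: the two singular orbits are single fixed points of $G$, all other orbits are free, and by the link argument $G$ has periodic cohomology with $d=n$ as a period --- this is (a). If $r=3$, then $1/n_{1}+1/n_{2}+1/n_{3}=1+2/|G|>1$, and an elementary enumeration (ordering $n_{1}\leq n_{2}\leq n_{3}$) leaves only
\[
(n_{1},n_{2},n_{3};|G|)\in\bigl\{(2,2,k;2k),\ (2,3,3;12),\ (2,3,4;24),\ (2,3,5;60)\bigr\}.
\]

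To identify $G$ and bound $n$ when $r=3$, consider $(2,2,k;2k)$. Pick $x\in\mathcal O_{3}$; then $N:=G_{x}$ has index $2$, hence is normal and fixes both points of $\mathcal O_{3}$. An involution $a$ that is the isotropy group of some point of $\mathcal O_{1}\cup\mathcal O_{2}$ cannot lie in $N$: otherwise $\mathrm{Fix}(a)$ would contain both points of $\mathcal O_{3}$ and so equal that pair, contradicting $\mathrm{Fix}(a)\cap(\mathcal O_{1}\cup\mathcal O_{2})\neq\emptyset$. So $G$ is the split extension of $\langle a\rangle\cong\mathbb Z/2$ by $N$, and its involutions are exactly the $na$ $(n\in N)$ with $ana^{-1}=n^{-1}$. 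Each of the $2k$ points of $\mathcal O_{1}\cup\mathcal O_{2}$ has an order-$2$ isotropy group $\langle b\rangle$, and $\langle b\rangle$ is the isotropy of at most the two points of $\mathrm{Fix}(b)$; hence at least $k=|N|$ distinct involutions occur, forcing conjugation by $a$ to invert all of $N$. Then $N$ is abelian, hence (a free-action group) cyclic, so $G\cong D_{k}$. A parallel but longer analysis --- balancing involution and Sylow-subgroup counts against the ``$\mathrm{Fix}$ has $\leq 2$ points'' constraint and the periodicity of the $G_{x}$ --- identifies $G$ as $A_{4}$, $S_{4}$, $A_{5}$ in the other three cases. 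Finally, $D_{k}$ with $k$ even, and each of $A_{4},S_{4},A_{5}$, contains a subgroup $V\cong(\mathbb Z/2)^{2}$; since no isotropy group contains $V$, we have $\mathrm{Fix}(V)=\emptyset$, and Borel's formula applied to $V$ acting on the mod $2$ cohomology $n$-sphere $X$,
\[
n-\dim\mathrm{Fix}(V)=\sum_{H}\bigl(\dim\mathrm{Fix}(H)-\dim\mathrm{Fix}(V)\bigr)
\]
(summed over the three index-$2$ subgroups $H\leq V$), gives $n+1\leq 3$ because $\dim\mathrm{Fix}(V)=-1$ and each $\dim\mathrm{Fix}(H)\in\{-1,0\}$; hence $n=2$. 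Thus for $n>2$ the only $r=3$ possibility is $D_{k}$ with $k$ odd --- case (b) --- while for $n=2$ all four families survive, which is (c).

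The main obstacle is the purely group-theoretic identification of $G$ in the three polyhedral cases: lacking an index-$2$ subgroup, one must extract the isomorphism type from the orders of the isotropy groups by carefully pitting involution and Sylow-subgroup counts against the Smith constraint and the periodicity of the $G_{x}$. Two further points deserve care within the category of admissible $\mathbb Z$-cohomology manifolds: justifying the additivity of $\chi$ and the fact that $X/G$ is a rational cohomology sphere, and the local claim that links of points are $\mathbb Z$-cohomology spheres (which is what feeds the periodicity conclusions). Granting these, the Borel-formula step --- which is what actually forces $n=2$ --- is routine, and the whole argument recovers Kulkarni's trichotomy.
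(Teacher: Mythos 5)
First, a caveat: the paper does not prove this statement---it is quoted verbatim from Kulkarni's 1982 paper as background---so there is no in-paper proof to compare against; your proposal has to be measured against Kulkarni's own argument. Your skeleton is in fact the right one and matches his: the orbifold Euler characteristic identity $\sum_i(1-1/n_i)=2-2/|G|$ obtained from the transfer and the covering $X\setminus\Sigma\to (X/G)\setminus\overline{\Sigma}$; Smith theory giving both the periodicity of isotropy groups (free action on the link, a cohomology $(n-1)$-sphere) and the bound $|\mathrm{Fix}(g)|\le 2$ for involutions; the enumeration of solutions $r=2$ and $(2,2,k)$, $(2,3,3)$, $(2,3,4)$, $(2,3,5)$; and the Borel formula for $(\mathbb{Z}/2)^2$ to force $n=2$ whenever $G$ contains a Klein four-group. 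The $r=2$ case and the $(2,2,k)$ case are carried out correctly and completely: the counting argument showing every element of the nontrivial coset of $N$ is an involution, hence that $N$ is inverted, abelian, and (acting freely on an odd-dimensional cohomology sphere) cyclic, is exactly what is needed.

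The one genuine gap is the sentence ``a parallel but longer analysis \dots identifies $G$ as $A_4$, $S_4$, $A_5$ in the other three cases.'' This is not a routine remark: it is the step your own closing paragraph admits is the main obstacle, and as written your logic also needs it \emph{before} the Borel step, since you deduce $(\mathbb{Z}/2)^2\le G$ from $G\in\{A_4,S_4,A_5\}$. You can repair the ordering without the full identification: the orbit of size $|G|/2$ with isotropy of order $2$ forces at least $|G|/4$ involutions, which rules out cyclic or generalized quaternion Sylow $2$-subgroups (those admit at most one involution per Sylow subgroup, hence at most $n_2$ involutions in all), so the Sylow $2$-subgroup contains $(\mathbb{Z}/2)^2$ and Borel gives $n=2$ directly; the isomorphism type can then be pinned down afterwards by Sylow counting (e.g.\ six Sylow $5$-subgroups in the order-$60$ case forces $A_5$) or by appeal to the classical $n=2$ theory. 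The remaining items you flag---additivity of $\chi$, the fact that links are cohomology spheres, and the behaviour of $X/G$---are exactly what Kulkarni's ``admissible space'' hypotheses exist to guarantee, so flagging them without proof is acceptable for a reconstruction at this level of detail.
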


Actions of the first type in the theorem arise as suspensions of free actions. Actions of the third type arise as classical actions on the $2$-sphere.
Kulkarni, however, remarked (p. 222) that he did not know whether a dihedral group of order $2k$, $k$ odd, actually can act pseudofreely on a $\mathbb{Z}$-cohomology manifold which is a $\mathbb{Z}_{2}$-cohomology sphere of even dimension $>2$.

It should be remarked that aside from the standard actions on spheres $S^{1}$ and $S^{2}$ there are no such pseudofree \emph{linear} actions.

We restrict attention primarily to locally linear actions on actual closed manifolds,
and we find it useful to consider the slightly broader class of \emph{tame} actions. We understand a pseudofree action of a finite group $G$ to be \emph{tame} if each point $x$ has a disk neighborhood invariant under the isotropy group $G_{x}$.

We denote the dihedral group of order $2k$ by $D_{k}$ and the cyclic group of order $k$ by $C_{k}$. We fix an expression of $D_{k}$ as a semidirect product with group extension
\[
1\to C_{k} \to D_{k} \to C_{2} \to 1
\]
where the quotient $C_{2}$ acts on the normal subgroup $C_{k}$ by inversion. There are $k$ involutions in $D_{k}$, all conjugate, and all determining the various splittings as a semidirect product.

\begin{thm}\label{thm:nonexistence}
The dihedral group $D_{k}$ of order $2k$, $k$ odd, does not act locally linearly or tamely, pseudofreely, and preserving orientation, on $S^{n}$ when $n\equiv 0\mod 4$.
\end{thm}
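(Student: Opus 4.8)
The plan is to derive a contradiction from the Atiyah--Singer $G$-signature theorem, exploiting the fact that the middle-dimensional cohomology of $S^{n}$ vanishes when $n=4m$.

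First I would pin down the orbit structure. Suppose $D_{k}$ acts as in the statement. The isotropy group of a singular point acts freely---and, in the locally linear case, linearly---on a linking $(n-1)$-sphere, so it must be a subgroup of $D_{k}$ that acts freely on some sphere; for $k$ odd these are precisely the cyclic groups $C_{d}$ with $d\mid k$ together with the order-two subgroups generated by involutions, since a subgroup $D_{d}$ with $d>1$ fails the $2p$-condition. Since the action preserves orientation it acts trivially on $H^{*}(S^{n};\mathbb{Q})$, so $S^{n}/D_{k}$ is a rational cohomology $n$-sphere and $\chi(S^{n}/D_{k})=2$; multiplicativity of the Euler characteristic over the free part, together with the fact that all the links are odd-dimensional and hence have zero Euler characteristic, then gives
\[
\sum_{i}\Bigl(1-\frac{1}{|G_{x_{i}}|}\Bigr)=2-\frac{1}{k},
\]
the sum running over the singular orbits. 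Multiplying through by $2k$, reducing modulo $k$, and using crude size estimates on the divisors of $k$ that can occur, one finds that the only solution is one orbit with isotropy $C_{k}$ and two orbits with isotropy of order $2$---the three singular orbits appearing in Kulkarni's theorem. In particular there is a single size-two orbit $\{x,y\}$ with isotropy $C_{k}$, every involution $t\in D_{k}$ interchanges $x$ and $y$, and $t$ carries the linear $C_{k}$-action on $T_{x}S^{n}$ to the $C_{k}$-action on $T_{y}S^{n}$ precomposed with the inversion automorphism of $C_{k}$. Since $t$ preserves the orientation of $S^{n}$, it follows that, with respect to orientation-compatible complex structures, any element of $C_{k}$ has rotation numbers at $y$ equal to the negatives of its rotation numbers at $x$.

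Now fix a prime $p\mid k$ and an element $g\in C_{k}$ of order $p$. Since every isotropy group other than $C_{k}$ has order $2$, which is prime to $p$, we get $\operatorname{Fix}(g)=\{x,y\}$. Write $n=4m$ and choose, at $x$, a $C_{k}$-invariant complex structure compatible with the orientation; then $g$ acts on $T_{x}S^{n}$ with rotation numbers $\theta_{1},\dots,\theta_{2m}$, each of the form $\theta_{j}=2\pi a_{j}/p$ with $\gcd(a_{j},p)=1$, and by the previous step $g$ acts on $T_{y}S^{n}$ with rotation numbers $-\theta_{1},\dots,-\theta_{2m}$. Because $H^{2m}(S^{4m})=0$, the $g$-signature $\operatorname{sign}(g,S^{4m})$ vanishes, so the $G$-signature theorem (in its locally linear, or tame, form) yields
\[
0=\prod_{j=1}^{2m}\frac{e^{i\theta_{j}}+1}{e^{i\theta_{j}}-1}+\prod_{j=1}^{2m}\frac{e^{-i\theta_{j}}+1}{e^{-i\theta_{j}}-1}.
\]
Since $\dfrac{e^{-i\theta}+1}{e^{-i\theta}-1}=-\dfrac{e^{i\theta}+1}{e^{i\theta}-1}$ and the number $2m$ of factors is even, the right-hand side equals $2\prod_{j=1}^{2m}\dfrac{e^{i\theta_{j}}+1}{e^{i\theta_{j}}-1}$. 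But each factor equals $-i\cot(\theta_{j}/2)$, which is nonzero: its numerator vanishes only if $\theta_{j}\equiv\pi$, impossible because $p$ is odd, and its denominator only if $\theta_{j}\equiv0$, impossible because $\gcd(a_{j},p)=1$. Hence the displayed product is nonzero, a contradiction. This proves the theorem for $n\equiv0\bmod4$. When $n\equiv2\bmod4$ the number of factors is odd, the two fixed-point contributions cancel identically, and the method gives no information---which is exactly why that case stays open.

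I expect the main obstacle to lie in the non-smooth setting: one must invoke the $G$-signature theorem, together with its formula for an isolated fixed point, for locally linear or merely tame actions rather than smooth ones. For locally linear actions this is classical and the fixed-point contribution is literally the product of cotangent-type terms written above. For tame actions the correct contribution at an isolated fixed point should be interpreted as the Atiyah--Singer equivariant signature (a $\rho$-type invariant) of the free action on the linking sphere, and the argument then needs only that this invariant is nonzero and is multiplied by $(-1)^{n/2}$ when $g$ is replaced by $g^{-1}$. Establishing these two properties in the required generality---in particular, excluding the possibility that some exotic free linking action makes the relevant invariant vanish---is the technical heart of the proof.
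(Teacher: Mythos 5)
Your route is genuinely different from the paper's, and in the smooth category its core is sound: the orbit count forcing type $(2,2,k)$, the observation that an involution $t$ interchanges the two $C_{k}$-fixed points and conjugates $g$ to $g^{-1}$ so that the local datum at $y$ is that of $g^{-1}$ at $x$, and the cancellation pattern $0=\operatorname{sign}(g,S^{4m})=2\prod_{j}(-i\cot(\theta_{j}/2))\neq 0$ (an even number of nonzero purely imaginary factors) is a correct use of the Atiyah--Singer $G$-signature theorem for smooth actions. But the theorem is asserted for locally linear \emph{topological} and, more broadly, \emph{tame} actions, and $n\equiv 0\bmod 4$ includes $n=4$. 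Your assertion that the fixed-point form of the $G$-signature theorem ``is classical'' in the locally linear topological setting is exactly where the gap lies: Atiyah--Singer is a theorem about smooth actions, and transporting it to topological locally linear actions requires a separate argument (in effect the $\rho$-invariant/bordism scheme you sketch: one needs that $\operatorname{sign}(g,\cdot)$ vanishes for free actions of finite groups on closed topological manifolds, which rests on surgery-theoretic input, and in dimension $4$ on Freedman-type results for finite fundamental groups). In the tame case the local term is the equivariant signature defect of a possibly exotic free $C_{k}$-action on the linking $S^{n-1}$, and your argument needs its real part to be nonzero for some nontrivial $g$; since fake lens spaces alter $\rho$ by characters coming from the surgery group $L_{4m}(\mathbb{Z}[C_{k}])$, this nonvanishing is not automatic, and you explicitly leave it unproved. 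So as a proof of the stated theorem the proposal is incomplete precisely in the generality the statement claims.

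For comparison, the paper's proof avoids index theory altogether: delete invariant disk neighborhoods of the three singular orbits, pass to the quotient $Y$ with $\partial Y=P_{1}\sqcup P_{2}\sqcup L_{k}$ (two homotopy projective spaces and a homotopy lens space), and classify the $D_{k}$-cover by $f\colon Y\to K(D_{k},1)$. Since $H_{n-1}(D_{k};\mathbb{Z})\cong\mathbb{Z}/2k$ when $n-1\equiv 3\bmod 4$, with $f_{*}[P_{i}]$ of order $2$ and $f_{*}[L_{k}]$ of order $k$, the relation $f_{*}[P_{1}]+f_{*}[P_{2}]+f_{*}[L_{k}]=0$ forces $f_{*}[L_{k}]=0$, a contradiction. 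That elementary homological argument applies verbatim to tame actions, works in dimension $4$, and in fact rules out such actions on \emph{any} closed orientable $n$-manifold of orbit type $(2,2,k)$ --- a strengthening your signature approach would not give without the missing analytic/surgery input. If you want to salvage your method, state it for smooth actions only, or supply the topological $G$-signature theorem and the $\rho$-nonvanishing as explicit lemmas with proofs or precise references.
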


In fact the argument shows that there do not exist orientation-preserving actions of $D_{k}$ on closed $n$-manifolds, $n\equiv 0\mod 4$, having exactly three orbit types $(2,2,k)$, i.e., with isotropy groups $C_{2}, C_{2}, C_{k}$.

We do not consider here the possibilities for nontrivial pseudofree actions on higher dimensional manifolds with the cohomology of $S^{n}$, such as $S^{n}\times\mathbb{R}^{m}$.

It remains to ponder the case $n\equiv 2\mod 4$, which encompasses the classical actions on the $2$-sphere. In this case we obtain a weak positive result.

\begin{thm}\label{thm:existence}
If $n\equiv 2\mod 4$ and $k$ is an odd positive integer, then there is a smooth, closed, orientable $n$--manifold on which the dihedral group $D_{k}$ of order $2k$ acts smoothly, pseudofreely, preserving orientation, with exactly three singular orbits of types $(2,2,k)$.
\end{thm}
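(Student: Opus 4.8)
The plan is to build such an $M$ by equivariant cut-and-paste: glue together the three standard tubular-neighbourhood models for the prescribed slice types along their boundaries, filling in a free $D_{k}$-cobordism; existence then reduces to the vanishing of a single equivariant bordism obstruction.

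\emph{Local models.} First I would pin down the slices. At a point with isotropy $C_{2}$, orientation-preservation together with $n$ even forces the slice representation to be $\mathbb{R}^{n}$ with $C_{2}$ acting by $-\mathrm{id}$, the unique fixed-point-free orientation-preserving linear involution. At the $C_{k}$-orbit the slice is an arbitrary free $C_{k}$-representation $V$ with $\dim_{\mathbb{R}}V=n$ (say $V=\chi^{\oplus n/2}$ for a faithful character $\chi$); it will emerge that this choice does not matter. Form the equivariant disk bundles $N_{0}=D_{k}\times_{C_{k}}D(V)$ and $N_{1}=N_{2}=D_{k}\times_{C_{2}}D(\mathbb{R}^{n}_{-})$; on each, $D_{k}$ acts smoothly, pseudofreely and orientation-preservingly with a single singular orbit (of type $k,2,2$ respectively), and the boundaries are the closed free $D_{k}$-manifolds $\Lambda_{0}=D_{k}\times_{C_{k}}S(V)$, $\Lambda_{1}=\Lambda_{2}=D_{k}\times_{C_{2}}S^{n-1}$, with orbit spaces the lens space $L=S(V)/C_{k}$ and $\mathbb{RP}^{n-1}$ (orientable, since $n-1$ is odd).

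\emph{Reduction to bordism.} The closed manifolds realizing the orbit data $(2,2,k)$ are exactly those of the form $M=N_{0}\cup_{\Lambda_{0}}W\cup_{\Lambda_{1}}N_{1}\cup_{\Lambda_{2}}N_{2}$ with $W$ a compact free $D_{k}$-manifold whose boundary is the disjoint union of the $\Lambda_{i}$ (with orientations induced from a global one; one may reverse orientations of individual pieces and join components of $W$ by equivariant tubes to keep $M$ connected). Since oriented free $D_{k}$-manifolds with orientation-preserving action are classified by maps of oriented manifolds to $BD_{k}$, and $[L],[\mathbb{RP}^{n-1}]$ are null-bordant (a lens space over $\mathbb{Z}/k$ with $k$ odd, and $\mathbb{RP}^{4i+1}$, have vanishing Stiefel--Whitney and Pontryagin numbers), such a $W$ exists if and only if
\[
(i_{C_{k}})_{*}[L\to BC_{k}]\;+\;2\,(i_{C_{2}})_{*}[\mathbb{RP}^{n-1}\to BC_{2}]\;=\;0 \quad\text{in}\quad \widetilde{\Omega}^{SO}_{n-1}(BD_{k}),
\]
where $i_{H}\colon BH\to BD_{k}$ is induced by inclusion.

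\emph{Vanishing of the obstruction, and the main obstacle.} As $|D_{k}|=2k$ with $k$ odd, restriction to Sylow subgroups embeds $\widetilde{\Omega}^{SO}_{n-1}(BD_{k})$ into $\widetilde{\Omega}^{SO}_{n-1}(BC_{2})\oplus\widetilde{\Omega}^{SO}_{n-1}(BC_{k})$, so it suffices to verify the equation after restricting to $C_{2}$ and to $C_{k}$. By Mackey's double-coset formula the $C_{2}$-restriction collapses to $2[\mathbb{RP}^{n-1}\to BC_{2}]$, which vanishes because $\mathbb{RP}^{n-1}$ carries an orientation-reversing self-diffeomorphism compatible with the map to $BC_{2}$ (induced by a linear reflection commuting with the antipodal map), so that this class is its own negative. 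The $C_{k}$-restriction is $(1+\tau)[L\to BC_{k}]$, where $\tau$ is the involution of $\widetilde{\Omega}^{SO}_{n-1}(BC_{k})$ induced by inversion on $C_{k}$; realizing the inversion-twist geometrically by complex conjugation on the universal cover of $L$, which changes orientation by $(-1)^{n/2}$, one finds that $(1+\tau)$ annihilates the lens class precisely when $n\equiv 2\bmod 4$. Hence the obstruction vanishes, $W$ exists, and $M$ is assembled; it is automatically smooth, closed, orientable, with $D_{k}$ acting smoothly, pseudofreely, preserving orientation, with exactly the three singular orbits $(2,2,k)$. The step I expect to require the most care is exactly this orientation bookkeeping in the $C_{k}$-restriction --- tracking the sign $(-1)^{n/2}$ through the double-coset formula --- since it is this sign that makes the construction succeed for $n\equiv 2\bmod 4$, whereas the same computation yields the nonzero class $2[L\to BC_{k}]$ (a $(4i+3)$-dimensional lens space, which by its nonvanishing Atiyah--Patodi--Singer $\rho$-invariant bounds no free $C_{k}$-manifold) when $n\equiv 0\bmod 4$, in line with Theorem~\ref{thm:nonexistence}.
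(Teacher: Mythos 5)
Your construction is correct, and its skeleton is the same as the paper's: everything is reduced to showing that $L\sqcup P_{1}\sqcup P_{2}\to BD_{k}$ is null-bordant in $\Omega^{SO}_{n-1}(BD_{k})$, after which one glues (you work upstairs with a free $D_{k}$-cobordism $W$ and equivariant disk bundles; the paper works downstairs with a null-bordism of the orbit data, passes to the $2k$-fold cover and caps with disks --- these are the same construction). You also handle the two projective-space classes exactly as the paper does, via the orientation-reversing diffeomorphism giving $2[\mathbb{RP}^{n-1}\to BC_{2}]=0$. Where you genuinely diverge is the lens-space piece. The paper proves (Proposition 3.1) that the whole map $\Omega_{q}(BC_{k})\to\Omega_{q}(BD_{k})$ vanishes for $q\equiv 1\bmod 4$, by comparing Atiyah--Hirzebruch spectral sequences and playing $k$-torsion against $2$-torsion; you instead detect $\widetilde\Omega^{SO}_{n-1}(BD_{k})$ by transfers to $C_{2}$ and $C_{k}$, compute them by the double-coset formula, and kill $(1+\tau)[L\to BC_{k}]$ by realizing the inversion twist as complex conjugation, whose degree $(-1)^{n/2}$ is $-1$ exactly when $n\equiv 2\bmod 4$. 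Your route is more geometric, avoids spectral sequences, proves only the special case needed, and has the bonus of making visible why the sign flips for $n\equiv 0\bmod 4$, in harmony with Theorem \ref{thm:nonexistence}; the paper's Proposition 3.1 is a stronger blanket statement whose method also transfers to Spin bordism, which is what the paper's remark uses to make the examples $2$-connected. Two small points of hygiene in your write-up: $C_{k}$ is a Sylow subgroup only when $k$ is a prime power, but detection through $C_{2}$ and $C_{k}$ still holds because the transfer to each odd Sylow subgroup factors through the transfer to $C_{k}$; and ``restriction'' in your homological setting should be read as the bordism transfer (pullback of the induced covering), which is also what legitimizes your double-coset computations. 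Neither affects correctness. Finally, note that your obstruction must vanish in the unreduced group, which it does since you also observe that $[L]$ and $[\mathbb{RP}^{n-1}]$ bound as oriented manifolds.
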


The argument shows that when $n\ge 6$ such $n$-manifolds can be chosen to be $2$-connected. But it remains an open question whether the manifold can be chosen to be a sphere or a mod 2 homology sphere.

We also consider the case of orientation-reversing pseudofree actions on spheres.
\begin{thm}\label{thm:orientation_reversing_cases}
Suppose that a finite group $G$ acts locally linearly and pseudofreely on a sphere $S^{n}$, with some elements of $G$ reversing orientation. If $n$ is odd, then $G$ must be a dihedral group and $n\equiv 1\mod 4$; and if $n$ is even, then $G$ must be a periodic group with a subgroup of index $2$.\end{thm}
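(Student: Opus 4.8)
Write $G_{0}\le G$ for the kernel of the orientation character $G\to\{\pm 1\}$; it has index $2$, so the asserted index-two subgroup is present in either parity, and the real content is the restriction on $G$ (and, when $n$ is odd, a congruence on $n$). In both cases the plan is to read off the local representations at points with nontrivial isotropy and then invoke Kulkarni's theorem (when $n$ is even) or Theorem~\ref{thm:nonexistence} (when $n$ is odd).

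Suppose first that $n$ is odd. If $x\in S^{n}$ has $G_{x}\ne 1$, then by local linearity $G_{x}$ acts linearly on $\mathbb{R}^{n}$ and, by pseudofreeness, freely on the linking sphere $S^{n-1}$; since $n-1$ is even this forces $G_{x}\cong C_{2}$ with generator acting as $-I\in O(n)$, which reverses orientation because $n$ is odd. Hence no orientation-preserving element has a fixed point: $G_{0}$ acts freely on $S^{n}$ and so has periodic cohomology. By Smith theory each orientation-reversing involution has exactly two fixed points, and computing the Lefschetz number of the induced involution $\bar t$ on the oriented manifold $M=S^{n}/G_{0}$ (each fixed point, modelled on $-I$, contributes $+1$, while $\bar t$ negates the top rational class) gives $L(\bar t)=2$; thus $\bar t$ has exactly two fixed points, the singular set of the $G$-action consists of $2|G_{0}|$ points forming exactly two orbits, and therefore every one of the $|G_{0}|$ elements of $G\setminus G_{0}$ is an involution. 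Conjugation by any such $t$ then inverts $G_{0}$, so inversion is an automorphism of $G_{0}$, whence $G_{0}$ is abelian, hence --- being periodic --- cyclic, say $G_{0}\cong C_{k}$ and $G\cong D_{k}$. To pin down $n\bmod 4$, form the join $S^{n}*S^{0}=S^{n+1}$, letting $D_{k}$ act on $S^{0}$ through its quotient $C_{2}$ (so $C_{k}$ fixes both points and $t$ interchanges them). This action is tame, pseudofree and orientation-preserving, and it has precisely the three orbit types $(2,2,k)$: the $2k$ reflection fixed points survive (two orbits of isotropy $C_{2}$) and the two join points form one orbit of isotropy $C_{k}$. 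If $n\equiv 3\bmod 4$ then $n+1\equiv 0\bmod 4$, and Theorem~\ref{thm:nonexistence} (in the form stated there for closed manifolds with orbit types $(2,2,k)$) forbids this. Hence $n\equiv 1\bmod 4$.

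Suppose now that $n$ is even. Every odd-order element of $G$ lies in $G_{0}$ (its determinant is a root of unity of odd order, so equals $1$), so for each odd prime $p$ the Sylow subgroup $G_{p}\le G_{0}$ acts pseudofreely and orientation-preservingly on the even sphere $S^{n}$; Kulkarni's theorem then makes $G_{p}$ periodic, hence cyclic. It therefore suffices to show $G$ has no subgroup $V\cong C_{2}\times C_{2}$, for then every Sylow subgroup of $G$ is cyclic or generalized quaternion and $G$ is periodic. On $S^{n}$ with $n$ even an orientation-preserving involution has exactly two fixed points, while an orientation-reversing one acts freely (a nonempty fixed set would have positive dimension and odd codimension, contradicting pseudofreeness). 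If $V$ had one orientation-preserving and two orientation-reversing involutions, then $V$ would act freely on $S^{n}$ minus the two fixed points of the first --- a finite-dimensional mod-$2$ homology $(n-1)$-sphere --- forcing $V$ to have periodic mod-$2$ cohomology, which is false. If all three involutions of $V$ preserved orientation, then $V\le G_{0}$, and Kulkarni's theorem forces $n=2$ with $G_{0}$ one of $D_{m}$ ($m$ even), $A_{4}$, $S_{4}$, $A_{5}$; in each case $S^{2}/G_{0}$ is a $2$-sphere with at most three cone points permuted by the orientation-reversing involution $\bar t$ coming from $t\in G\setminus G_{0}$, and comparing the cone-point orders with the fact that an orientation-reversing involution of $S^{2}$ is either free or has a branch circle --- the latter pulling back to a positive-dimensional fixed set of some element of $G$ --- yields a contradiction in every case. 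Hence $V$ cannot exist and $G$ is periodic.

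I expect the odd case to go through cleanly once $G\cong D_{k}$ is identified, since it then reduces directly to Theorem~\ref{thm:nonexistence} via the join construction. The main obstacle is the even case, and within it the exclusion of a Klein four-subgroup whose involutions all preserve orientation: this requires ruling out, essentially by hand in the orbit space, the possibility that $G_{0}$ is one of the polyhedral groups or a dihedral group $D_{m}$ with $m$ even --- precisely the configurations that Kulkarni's theorem permits in dimension $2$.
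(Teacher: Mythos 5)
In the odd-dimensional case you follow essentially the paper's route (identify $G_{0}$ as cyclic via the inversion automorphism, then pass to the twisted suspension/join and invoke the nonexistence result), but there is a genuine gap at the last step: Theorem~\ref{thm:nonexistence}, and the remark extending it to closed manifolds of type $(2,2,k)$, are stated and proved only for $k$ \emph{odd} --- the proof rests on Proposition~\ref{prop:trivialcoefs}, which computes $H_{*}(D_{k})$ for odd $k$ and is false as stated for even $k$. At the point where you apply it you only know $G_{0}\cong C_{k}$ for some $k$; nothing in your argument excludes $k$ even (the central involution of $C_{k}$ would simply act freely), so the case $k$ even, $n\equiv 3\bmod 4$ is not covered as written. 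The paper closes exactly this hole before citing Theorem~\ref{thm:nonexistence}: it applies Kulkarni's theorem to the suspended action on $S^{n+1}$ (orientation-preserving, pseudofree, three singular orbits, dimension $>2$), which forces the orientation-preserving subgroup to have odd order. Since you already build the suspension, the repair is one sentence, but it is needed. (Also, your detour through $L(\bar t)=2$ on $S^{n}/G_{0}$ to show every element of $G\setminus G_{0}$ is an involution is correct but roundabout: an orientation-reversing element has a fixed point by Lefschetz, its square preserves orientation and fixes that point, and $G_{0}$ acts freely, so the square is trivial --- this is the paper's argument.)

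In the even-dimensional case your route is genuinely different from the paper's and appears correct, though your second subcase is only sketched. The paper runs Kulkarni's trichotomy on $H=G_{0}$: in the semifree case it deletes the two fixed points to get a free $G$-action on $S^{n-1}\times\mathbb{R}$, hence periodicity, and it kills the dihedral $(2,2,k)$ case by the Euler-characteristic/orbit argument ($\chi(S^{n}/H)=2$, $\chi(S^{n}/G)=1$, yet $G/H$ must fix the unique singular orbit of type $k$). You instead prove periodicity of $G$ directly: odd Sylow subgroups are cyclic by Kulkarni, and a Klein four-subgroup is excluded either by Smith theory on the complement of the two fixed points (when its involutions have mixed orientation behavior) or, when all three preserve orientation, by reducing via Kulkarni to $n=2$ and examining the induced orientation-reversing involution of $S^{2}/G_{0}$. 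The sketched case check does close: that involution must act freely (a fixed circle would pull back to a non-discrete fixed set of some element of $G$, violating pseudofreeness), yet it preserves cone-point orders, so it fixes the cone point whose order occurs only once (types $(2,2,m)$ with $m>2$, $(2,3,3)$, $(2,3,4)$, $(2,3,5)$), and for $G_{0}=D_{2}$ it must fix one of the three order-$2$ points since $3$ is odd. Your approach buys uniform treatment of the $n=2$ polyhedral and even-dihedral configurations, which the paper's written proof does not address explicitly; the paper's argument buys the stronger conclusion that a dihedral $G_{0}$ with orbit types $(2,2,k)$ cannot occur at all in even dimensions, rather than merely that $G$ is periodic with $G_{0}$ of index $2$.
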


When $n$ is odd the prototype is the standard action of the dihedral group on a circle, but we do not know if there are analogous actions in higher odd dimensions. The existence is closely related to the existence of orientation-preserving actions in neighboring even dimensions.

When $n$ is even, these kinds of actions arise as  ``twisted suspensions'' of free actions.  Such actions by even order cyclic groups have been studied and classified by S.~E.~Cappell and J.~L.~Shaneson \cite{CappellShaneson1978}, in the piecewise linear case, and by S.~Kwasik and R.~Schultz \cite{KwasikSchultz1990, KwasikSchultz1991}, in the purely topological case.

\section{Proof of Theorem \ref{thm:nonexistence}}
Suppose that $D_{k}$ acts pseudofreely on $S^{n}$. Such an action cannot be free since $D_{k}$ does not satisfy the Milnor condition that every element of order two lies in the center. Similarly $n$ cannot be odd. For otherwise a nontrivial isotropy group would act freely, preserving orientation, on an even-dimensional sphere linking a point with nontrivial isotropy group. But this would violate the Lefschetz fixed point theorem.

So henceforth we assume that $n>2$ and $n$ is even. Now by the Lefschetz Fixed Point theorem we also conclude that every nontrivial element of $D_{k}$ has exactly two fixed points. The two fixed points of an element of order $k$ are interchanged by all the elements of order $2$, since otherwise the dihedral group would act freely on a linking sphere to one of the fixed points. On the other hand the cyclic subgroup $C_{k}$ permutes the fixed points of the elements of order two in two orbits of size $k$. Now removing small invariant disk neighborhoods of the singular set and passing to the quotient we obtain an $n$-manifold $Y^{n}$ whose boundary consists of two homotopy real projective $(n-1)$-spaces $P_{1}$ and $P_{2}$ and a single homotopy lens space $L_{k}$. We have $\pi_{1}(Y)=D_{k}$, $\pi_{1}(P_{j})\approx C_{2}$, and $\pi_{1}(L_{k})=C_{k}$.

The regular covering over $Y$ is classified by a map $f:Y\to K(D_{k},1)$. Note that although there are $k$ different subgroups of order $2$, they are all conjugate in $D_{k}$ and hence there is a well-defined inclusion-induced homomorphism $H_{*}(C_{2})\to H_{*}(D_{k})$, as well as the usual homomorphism $H_{*}(C_{k})\to H_{*}(D_{k})$.

We will make use of the following elementary, well-known,  homology calculation. The proof is an exercise in the spectral sequence of the split extension
$
1\to C_{k}\to D_{k}\to C_{2}\to 1.
$

\begin{prop}\label{prop:trivialcoefs}
For $k$ an odd integer, the homology of $D_{k}$ with $\mathbb{Z}$ coefficients is given by
\[
H_{q}(D_{k};\mathbb{Z})=
\begin{cases}
\mathbb{Z}  &  \text{for } q=0\\
\mathbb{Z}/2 &  \text{for } q\equiv 1\mod 4\\
\mathbb{Z}/2k &  \text{for } q\equiv 3\mod 4\\
0 &  \text{for even } q>0
\end{cases}
\]
Moreover, when $ q\equiv 3\mod 4$,  the inclusion $C_{k}\to D_{k}$  induces an injection $$ \mathbb{Z}_{k}=H_{q}(C_{k};\mathbb{Z})\to H_{q}(D_{k};\mathbb{Z})=\mathbb{Z}_{2k}$$ and the projection $D_{k}\to C_{2}$ induces a surjection $$ \mathbb{Z}_{2k}=H_{q}(D_{k};\mathbb{Z})\to H_{q}(C_{2};\mathbb{Z})=\mathbb{Z}_{2}.$$ \qed
\end{prop}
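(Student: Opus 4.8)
The plan is to run the Lyndon--Hochschild--Serre spectral sequence of the split extension $1\to C_{k}\to D_{k}\to C_{2}\to 1$,
\[
E^{2}_{p,q}=H_{p}\bigl(C_{2};H_{q}(C_{k};\mathbb{Z})\bigr)\Longrightarrow H_{p+q}(D_{k};\mathbb{Z}),
\]
show it collapses at $E^{2}$ for parity reasons, and then read off both the groups and the two edge homomorphisms.

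First I would assemble the $E^{2}$-page. Since $k$ is odd, $H_{q}(C_{k};\mathbb{Z})$ is $\mathbb{Z}$ for $q=0$, is $\mathbb{Z}/k$ for $q$ odd, and vanishes for even $q>0$. The crucial point is the $C_{2}$-action: writing $q=2i-1$, inversion of $C_{k}$ acts on $H_{2i-1}(C_{k};\mathbb{Z})\cong\mathbb{Z}/k$ by multiplication by $(-1)^{i}$. Indeed it acts by $-1$ on $H_{1}$, and since $H^{2i}(C_{k};\mathbb{Z})$ is the Pontryagin dual of $H_{2i-1}(C_{k};\mathbb{Z})$ and the automorphism $t\mapsto t^{a}$ acts on the periodic ring $H^{*}(C_{k};\mathbb{Z})$ by $a^{i}$ in degree $2i$, the same holds on $H_{2i-1}$. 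Hence the action is trivial precisely when $q\equiv 3\bmod 4$ and is multiplication by $-1$ when $q\equiv 1\bmod 4$. Applying $H_{p}(C_{2};-)$: as $k$ is odd, $H_{p}(C_{2};M)=0$ for $p>0$ whenever $M$ is $k$-torsion (it is killed by $\gcd(2,k)=1$), while $H_{0}(C_{2};\mathbb{Z}/k)$ for the inversion action is $(\mathbb{Z}/k)/2(\mathbb{Z}/k)=0$. Thus $E^{2}$ is concentrated on the bottom row $E^{2}_{p,0}=H_{p}(C_{2};\mathbb{Z})$ (which is $\mathbb{Z}$ for $p=0$, $\mathbb{Z}/2$ for $p$ odd, $0$ for positive even $p$) together with the single column $E^{2}_{0,q}=\mathbb{Z}/k$ for $q\equiv 3\bmod 4$. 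No differential $d^{r}$ with $r\ge 2$ can be nonzero: the only way one out of the bottom row could hit the column $p=0$ is with source $E^{r}_{r,0}$ and $r-1\equiv 3\bmod 4$, forcing $r\equiv 0\bmod 4$ and hence $E^{r}_{r,0}=0$; and there is nothing in negative rows or columns to supply the remaining differentials. So $E^{2}=E^{\infty}$.

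Reading off total degrees then gives $H_{0}(D_{k})=\mathbb{Z}$; $H_{q}(D_{k})=\mathbb{Z}/2$ for $q\equiv 1\bmod 4$ (only the bottom-row term survives); $H_{q}(D_{k})=0$ for even $q>0$; and for $q\equiv 3\bmod 4$ a short exact sequence $0\to E^{\infty}_{0,q}\to H_{q}(D_{k})\to E^{\infty}_{q,0}\to 0$, i.e.\ $0\to\mathbb{Z}/k\to H_{q}(D_{k})\to\mathbb{Z}/2\to 0$, which splits because $\gcd(k,2)=1$, giving $\mathbb{Z}/2k$. Finally, the two naturality assertions are exactly the edge homomorphisms of the collapsed sequence in degrees $q\equiv 3\bmod 4$: the composite $H_{q}(C_{k})\twoheadrightarrow H_{q}(C_{k})_{C_{2}}=E^{\infty}_{0,q}\hookrightarrow H_{q}(D_{k})$ is injective since the action is trivial in this range, and the composite $H_{q}(D_{k})\twoheadrightarrow E^{\infty}_{q,0}=E^{2}_{q,0}=H_{q}(C_{2})$ induced by $D_{k}\to C_{2}$ is surjective. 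The one step that needs genuine care is pinning down the $(-1)^{i}$ sign of the $C_{2}$-action on $H_{2i-1}(C_{k};\mathbb{Z})$, as this is what forces the mod-$4$ periodicity and the vanishing of the rows with $q\equiv 1\bmod 4$; an alternative is a transfer argument splitting $H_{*}(D_{k};\mathbb{Z})$ into its $2$-primary part $H_{*}(C_{2};\mathbb{Z})$ and its prime-to-$2$ part $H_{*}(C_{k};\mathbb{Z})^{C_{2}}$, but the spectral sequence has the advantage of keeping the edge maps directly in view.
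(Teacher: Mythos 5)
Your proof is correct and is exactly the approach the paper has in mind: the paper dismisses the proposition as ``an exercise in the spectral sequence of the split extension $1\to C_{k}\to D_{k}\to C_{2}\to 1$,'' and your argument carries out that exercise, with the key sign computation (inversion acting by $(-1)^{i}$ on $H_{2i-1}(C_{k};\mathbb{Z})$), the collapse at $E^{2}$, and the identification of the two naturality statements as edge homomorphisms all done correctly.
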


\begin{proof}[Completion of Proof of Theorem \ref{thm:nonexistence}]
The proof when $n\equiv 0\mod 4$ follows easily from Proposition \ref{prop:trivialcoefs}. Under the classifying map  $f:Y\to K(D_{k},1)$ restricted to $\partial Y$, $f_{*}[P_{i}]$ is the element of order $2$ in $H_{n-1}(D_{k})=\mathbb{Z}/2k$, for $i=1,2$. The element $f_{*}[L_{k}]$ is an element of order $k$. But obviously $f_{*}[P_{1}]+f_{*}[P_{2}]+f_{*}[L_{k}]=0$, since the classifying map is defined on all of $Y$. This implies $f_{*}[L_{k}]=0$, a contradiction.
\end{proof}

\begin{remark}
This argument shows that when $n\equiv 0\mod 4$ there is no orientation-preserving, pseudofree action of $D_{k}$ on any closed, orientable, $n$-manifold of type $(2,2,k)$, i.e., having singular orbit structure consisting of one orbit with isotropy group $C_{k}$ and two orbits with isotropy groups of order $2$.
\end{remark}

\section{Proof of Theorem \ref{thm:existence}}
We will also need a somewhat less precise statement for oriented bordism.
\begin{prop}\label{prop:}
For $k$ odd and $q\equiv 1\mod 4$, the map $\Omega_{q}(BC_{k})\to \Omega_{q}(BD_{k})$ induced by inclusion is zero.
\end{prop}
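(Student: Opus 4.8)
The plan is to compare the reduced oriented bordism groups of $BC_{k}$ and $BD_{k}$ in degrees $q\equiv 1\bmod 4$ and to show that the first is finite of odd order while the second is a finite $2$-group; since any homomorphism from a group of odd order to a $2$-group is zero, and the inclusion $C_{k}\to D_{k}$ induces the identity on the summand $\Omega_{q}(\mathrm{pt})$ of $\Omega_{q}(BG)=\Omega_{q}(\mathrm{pt})\oplus\widetilde\Omega_{q}(BG)$, this gives the proposition. The tool is the Atiyah--Hirzebruch spectral sequence $E^{2}_{p,r}=H_{p}(BG;\Omega_{r})\Rightarrow\Omega_{p+r}(BG)$ in its reduced form, so that only the columns $p\ge 1$ contribute, together with two standard facts about the oriented bordism ring: the torsion of $\Omega_{r}$ is a $2$-group for every $r$, and the free part of $\Omega_{r}$ vanishes unless $r\equiv 0\bmod 4$. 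Since $k$ is odd, these facts give $\mathrm{Tor}(\mathbb{Z}/k,\Omega_{r})=0$ for all $r$ and $\mathbb{Z}/k\otimes\Omega_{r}=0$ whenever $r\not\equiv 0\bmod 4$.

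For $G=C_{k}$ this is quick: $H_{j}(BC_{k};\mathbb{Z})$ is $k$-torsion (it is $\mathbb{Z}/k$ or $0$) for every $j\ge 1$, so by the universal coefficient theorem and the facts above every term $E^{2}_{p,r}$ with $p\ge 1$ is annihilated by $k$; hence $\widetilde\Omega_{q}(BC_{k})$ is finite of order a power of $k$, in particular of odd order, for every $q$.

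For $G=D_{k}$ I would combine Proposition~\ref{prop:trivialcoefs} with the universal coefficient theorem and check, bidegree by bidegree, that each $E^{2}_{p,r}$ with $p\ge 1$ and $p+r=q\equiv 1\bmod 4$ is a finite $2$-group. The mechanism is that, by Proposition~\ref{prop:trivialcoefs}, the groups $H_{j}(BD_{k};\mathbb{Z})$ contribute a $\mathbb{Z}/2k$ only in degrees $j\equiv 3\bmod 4$ and a $\mathbb{Z}/2$ only in degrees $j\equiv 1\bmod 4$, while a free summand of $\Omega_{r}$ occurs only for $r\equiv 0\bmod 4$. Because $p+r\equiv 1\bmod 4$, whenever $H_{p}(BD_{k};\mathbb{Z})$ or $H_{p-1}(BD_{k};\mathbb{Z})$ contributes a $\mathbb{Z}/2k$ the relevant $\Omega_{r}$ has $r\not\equiv 0\bmod 4$, hence no free part and only $2$-torsion, so the odd part $\mathbb{Z}/k$ dies under $\otimes$ or $\mathrm{Tor}$; and whenever $\Omega_{r}$ does have a free summand the matching homology group is the pure $2$-torsion $\mathbb{Z}/2$ (since then $p\equiv 1\bmod 4$). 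As the Atiyah--Hirzebruch filtration in a fixed total degree has only finitely many bounded steps, it follows that $\widetilde\Omega_{q}(BD_{k})$ is a finite $2$-group for $q\equiv 1\bmod 4$.

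Putting the two computations together finishes the proof. I expect the only real work to be the bidegree analysis for $BD_{k}$: lining up the $4$-periodic pattern of $H_{*}(D_{k};\mathbb{Z})$ from Proposition~\ref{prop:trivialcoefs} against the placement of the free and torsion parts of $\Omega_{*}$ across the four residues of $p$ modulo $4$, and keeping track of the filtration so that ``every associated graded piece is a $2$-group'' really yields ``$\widetilde\Omega_{q}(BD_{k})$ is a $2$-group''. Alternatively one can replace this bookkeeping by the transfer of the index-two inclusion $C_{k}<D_{k}$: it satisfies $\iota_{*}\tau_{*}=2$ on $\widetilde\Omega_{q}(BD_{k})$ and $\tau_{*}\iota_{*}=1+\sigma_{*}$ on $\widetilde\Omega_{q}(BC_{k})$, where $\sigma$ is induced by inversion of $C_{k}$; a short computation (inversion acts by $-1$ on $H_{m}(BC_{k};\mathbb{Z})$ for $m\equiv 1\bmod 4$) shows $\sigma_{*}=-1$ on $\widetilde\Omega_{q}(BC_{k})$ when $q\equiv 1\bmod 4$, so $2\iota_{*}=0$, and since $\widetilde\Omega_{q}(BC_{k})$ has odd order this again forces $\iota_{*}=0$.
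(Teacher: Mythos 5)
Your argument is correct and runs on the same engine as the paper's proof: the Atiyah--Hirzebruch spectral sequence together with the structure of $\Omega_{*}$ (only $2$-torsion, free part only in degrees $\equiv 0 \bmod 4$) and the homology of $C_{k}$ and $D_{k}$ as in Proposition~\ref{prop:trivialcoefs}. The only real difference is the endgame: the paper checks that the induced map of $E^{2}$-terms vanishes in every bidegree of total degree $\equiv 1 \bmod 4$ and then invokes comparison of spectral sequences, whereas you compute the two abutments separately --- $\widetilde\Omega_{q}(BC_{k})$ is finite of odd ($k$-power) order for all $q$, while $\widetilde\Omega_{q}(BD_{k})$ is a finite $2$-group when $q\equiv 1\bmod 4$ --- and conclude by coprimality. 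Your version is in fact a bit more robust, since a map vanishing on all associated graded pieces of a filtration need not vanish on the filtered group itself, a subtlety the torsion-order argument sidesteps (and your bidegree bookkeeping for $BD_{k}$, plus the observation that an iterated extension of finite $2$-groups is a $2$-group, is correct). One caveat, which the paper's own statement and proof share: on the summand $\Omega_{q}(\mathrm{pt})$ the map is the identity, and $\Omega_{q}$ need not vanish for $q\equiv 1\bmod 4$ (e.g.\ $\Omega_{5}=\mathbb{Z}/2$), so what is literally proved --- and all that is needed for the Corollary, together with the fact that an odd lens space has vanishing Stiefel--Whitney and Pontryagin numbers and hence bounds --- is the reduced statement; your phrase ``this gives the proposition'' papers over that in the same way the published proof does. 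Your alternative transfer argument also works, but the identification $\sigma_{*}=-1$ on $\widetilde\Omega_{q}(BC_{k})$ again only comes from the spectral sequence up to filtration; the clean fix is to note that $1+\sigma_{*}$ is filtration-decreasing, hence nilpotent, and then use that the group has odd order.
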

\begin{proof}
We indicate a proof, based on ``big guns''. According to results of Thom and Milnor, the oriented cobordism ring $\Omega_{*}$, is finitely generated, has no odd order torsion, and is finite except in dimensions divisible by $4$. Indeed all 2-torsion elements have order exactly 2, and the torsion subgroup is finitely generated in each dimension. Moreover, modulo torsion,  $\Omega_{*}$ is a polynomial algebra with one generator in each dimension $\equiv 0\mod 4$. Rationally these generators can be take to be complex projective spaces $\mathbb{C}P^{2m}$. For these and related facts, we refer to  R.~E.~Stong \cite{Stong1968}.

Now we can calculate both $\Omega_{q}(BC_{k})$ and $\Omega_{q}(BD_{k})$ via the Atiyah-Hirzebruch spectral sequences
\[
H_{i}(BC_{k};\Omega_{j})\Rightarrow \Omega_{i+j}(BC_{k})
\]
and
\[
H_{i}(BD_{k};\Omega_{j})\Rightarrow \Omega_{i+j}(BD_{k})
\]
Since is $k$  is odd, the groups $H_{i}(BC_{k};\Omega_{j})$ are zero unless $j\equiv 0\mod 4$, according to the above remarks. Therefore suppose $j\equiv 0\mod 4$.

Now $q=i+j$ is congruent to $1\mod 4$ if and only if $i\equiv 1\mod 4$. But  then 
$H_{i}(BC_{k};\Omega_{j})$ is $k$--torsion, since $H_{i}(BC_{k};\mathbb{Z})$ is $k$-torsion (for $i\ne 0$), while $H_{i}(BD_{k};\Omega_{j})$ is $2$--torsion by the Universal Coefficient formula. Therefore, in all such cases, with $i+j\equiv 1\mod 4$, the map
\[
H_{i}(BC_{k};\Omega_{j})\to
H_{i}(BD_{k};\Omega_{j})
\]
between the spectral sequences is trivial. The result follows by comparison of spectral sequences.
\end{proof}

\begin{cor}
If $L$ is a homotopy lens space of dimension $q\equiv 1\mod 4$ and fundamental group $\pi_{1}(L)=C_{k}$, then the composition 
$L\to BC_{k}\to BD_{k}$ of the classifying map with the inclusion-induced map is null-bordant.\qed
\end{cor}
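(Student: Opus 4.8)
The plan is to obtain the corollary as an essentially formal consequence of the preceding proposition, by unwinding the definition of the oriented bordism group $\Omega_{q}(BD_{k})$ of the space $BD_{k}$.

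First I would check that $L$ genuinely represents an element of $\Omega_{q}(BC_{k})$. Since $k$ is odd, $C_{k}$ has no subgroup of index $2$, so the orientation character of any closed manifold with fundamental group $C_{k}$ is trivial; hence $L$ is orientable. Fixing a fundamental class for $L$ together with the classifying map $g\colon L\to BC_{k}$ of its universal cover (a homotopy sphere), one gets a well-defined class $[L,g]\in\Omega_{q}(BC_{k})$.

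Next, because $q\equiv 1\bmod 4$, the preceding proposition says that the inclusion-induced homomorphism $\Omega_{q}(BC_{k})\to\Omega_{q}(BD_{k})$ is the zero map. Therefore the image of $[L,g]$ under it, which is exactly the bordism class of $L$ equipped with the composite $L\xrightarrow{g}BC_{k}\xrightarrow{i}BD_{k}$, vanishes in $\Omega_{q}(BD_{k})$. Finally I would translate this vanishing back into the language of the corollary: $[L,\,i\circ g]=0$ in $\Omega_{q}(BD_{k})$ means precisely that there is a compact oriented $(q+1)$-manifold $W$ with $\partial W=L$ and a map $W\to BD_{k}$ whose restriction to $\partial W=L$ is $i\circ g$; that is, the composite $L\to BC_{k}\to BD_{k}$ is null-bordant.

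There is no substantive obstacle here once the proposition is available; the only point that repays a moment's attention is the first step, namely confirming that $L$ is orientable — so that it carries an oriented bordism class at all — and that the map in question is the standard classifying map $L\to BC_{k}$ to which the proposition applies.
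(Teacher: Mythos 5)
Your argument is exactly the one the paper intends: the corollary is marked as immediate from Proposition 3.1, since $[L,\,i\circ g]$ is the image of $[L,g]\in\Omega_{q}(BC_{k})$ under the inclusion-induced map, which is zero for $q\equiv 1\bmod 4$. Your extra observation that $L$ is orientable (so that it defines an oriented bordism class) is a sensible, correct check, and the rest matches the paper's reasoning.
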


\begin{proof}[Proof of Theorem \ref{thm:existence}]
We show how to construct a smooth, pseudofree action of $D_{k}$ on a smooth $n$-manifold, for any $n\equiv 2\mod 4$, with this same orbit structure. At this writing we are not sure whether the manifold can be chosen to be a sphere or even a mod 2 homology sphere, however, when $n>2$. We conjecture that it cannot.

In dimension $2$ there is a standard $D_{k}$ action on the $2$--sphere such that when one removes invariant disk neighborhoods of the singular points and passes to the orbit space one has a disk with two holes, i.e., a pair of pants. One boundary circle has isotropy type $C_{k}$ and the other two boundary circles have isotropy type $C_{2}$. One can think of the classifying map we examined above as given by choosing two distinct elements of order $2$ in $D_{k}$, with product of order $k$. 

We now consider higher dimensions $n\equiv 2\mod 4$. 
Start with a disjoint union of two real projective $(n-1)$-spaces $P_{1}$ and $P_{2}$ and a single lens space $L_{k}$. We define a regular $D_{k}$ covering of $P_{1}\sqcup  P_{2}\sqcup  L_{k}$ by mapping $P_{i}\to K(D_{k},1)$, representing the nonzero element of $H_{n-1}(D_{k})=\mathbb{Z}/2$, that is taking the canonical map $P_{i}\to K(C_{2},1)$ followed by a map $K(C_{2},1)\to K(D_{k},1)$ induced by an inclusion $C_{2}\to D_{k}$. Similarly we take a standard inclusion $L_{k}\to K(C_{k},1)$ composed with the natural map $K(C_{k},1)\to K(D_{k},1)$. Note that $\mathbb{R}P^{n-1}$ admits an orientation-reversing diffeomorphism, hence represents an element of order $2$ in $\Omega_{n-1}(BC_{2})$.

It follows from the preceding remarks that the combined map $P_{1}\sqcup  P_{2}\sqcup  L_{k}\to K(D_{k},1)$ is null-bordant. Indeed, $P_{1}\sqcup  P_{2}\to K(D_{k},1)$ and $L_{k}\to K(D_{k},1)$ are separately null-bordant.

Choose such a manifold with the desired boundary and a $D_{k}$ covering extending the given one.  Passing to the $2k$-fold covering and capping off all the boundary spheres with disks provides the required $n$-manifold with pseudofree action of $D_{k}$ with the desired singular orbit structure.
\end{proof}

\begin{remark}Of course we can arrange that the manifold constructed is connected, by forming the connected sum of components in the orbit space and noting that the resulting classifying map for the covering, $W^{n}\to BD_{k}$ must be surjective on fundamental group.
We can also easily arrange that the manifold with group action constructed above be simply connected. Just use surgery to kill the normal subgroup of the fundamental group of the oriented manfold with boundary $P_{1}\sqcup  P_{2}\sqcup  L_{k}$ with quotient group $D_{k}$.
One can further arrange that the manifold with group action is $2$-connected.
According to Stong \cite{Stong1968}, for example, 
$\Omega_{q}^{\text{Spin}}$, like $\Omega_{q}$, has no odd order torsion and has elements of infinite order only in dimensions divisible by $4$. Using this, the preceding spectral sequence argument shows that $\Omega_{q}^{\text{Spin}}(BC_{k})\to \Omega_{q}^{\text{Spin}}(BD_{k})$ is $0$ for $k$ odd
 and $q\equiv 1\mod 4$.  From this it follows that the $n$-manifold with group action can be chosen to be $2$-connected (when $n\ge 6$) by arranging that the orbit manifold is spin and then doing spin surgery on $0$-, $1$-  and $2$-spheres in the orbit space.
\end{remark}
\section{Proof of Theorem \ref{thm:orientation_reversing_cases}: The orientation-reversing cases}
Suppose a finite group $G$ acts pseudofreely on $S^{n}$, but with not every element of $G$ preserving orientation. Let $H<G$ be the subgroup of index two that does preserve orientation. 

\subsection{Dimension $n$ odd}
The fundamental example in this case is the action of the dihedral group on the unit circle.

It follows from the Lefschetz Fixed Point Formula that each orientation-reversing element has exactly two fixed points. The pseudofree condition implies that nontrivial, orientation-preserving elements act without fixed point. That is, the subgroup $H$ acts freely.
Thus each orientation-reversing element has order two, since the square of an orientation-reversing element has fixed points and the orientation-preserving subgroup acts freely.
Moreover, each orientation-reversing element $x\in G- H$ acts on $y\in H$ by inversion, for $xyxy=e\Rightarrow xyx=y^{-1}$. The fact that inversion is an automorphism of $H$ implies that $H$ is abelian. Since $H$ acts freely on a sphere it satisfies the property that every subgroup of order $p^{2}$ is cyclic. It follows that $H$ is cyclic of some order, hence that $G$ is dihedral.

It remains to decide whether one can actually construct such orientation-reversing dihedral actions in higher dimensions. If there were such an orientation-reversing action in an odd dimension $n$, then one could promote it to an orientation-preserving, ``tame'' pseudofree action in dimension $n+1$ by twisted suspension. It then follows from Kulkarni's result that $H$ has odd order. We would therefore conclude that $n+1\equiv 2\mod 4$, by Theorem \ref{thm:nonexistence}. Thus we have ruled out $n\equiv 3\mod 4$, and must have $n\equiv 1\mod 4$.

\subsection{Dimension $n$ even}
It follows from the Lefschetz Fixed Point Formula that no orientation-reversing (pseudofree) element  has a fixed point.

In this case, by the results from the preceding section, the orientation-preserving subgroup $H$ must be one of those described by Kulkarni.

\subsubsection{$H$ a periodic group acting semifreely with two fixed points} 
Deleting the two $H$ fixed points, we see that $G$ acts freely on $S^{n-1}\times\mathbb{R}$, and hence $G$ must have periodic cohomology and has $H$ as a subgroup of index 2, as required. \qed

\begin{remark}
A standard example arises when $G$ acts freely, preserving orientation, on an odd dimensional equatorial sphere. Such an action can be extended by twisted suspension, using a projection $G\to \{\pm 1\}$.
Cappell and Shaneson  \cite{CappellShaneson1978} argue that every PL pseudofree action of $\mathbb{Z}_{2N}$ is a twisted suspension. 
They also point out that not every such action is equivalent to a twisted suspension, for instance, for the quaternion group of order 8. Note also that an even order periodic group need not have a subgroup of index 2. An example is the binary icosahedral group, which is perfect.
\end{remark}

\subsubsection{$H$ a dihedral group $D_{k}$, acting with three singular orbits of types $2,2,k$}
Note that we have already ruled this case out when $n\equiv 0\mod 4$.  With the extra orientation-reversing elements,  we are able to rule out such actions in all cases  when $n\equiv 0\mod 2$, as we now explain.

Now a transfer argument shows that the orbifold $X=S^{n}/H$ has the rational homology of $S^{n}$, since $H$ acts homologically trivially. In particular, $\chi(S^{n}/H)=2$. On the other hand the same transfer argument shows that the orbifold $X=S^{n}/G$ has the rational homology of a point, since $G$ acts homologically nontrivially. In particular, $\chi(S^{n}/G)=1$.

It follows that the action of $G/H\approx C_{2}$ on $S^{n}/H$ has no fixed points.  On the other hand, the action of $G/H\approx C_{2}$ on $S^{n}/H$ must preserve the image of the \emph{three} $H$ singular orbits. And one of these singular orbits (at least the one of type $k$) must be fixed by $G/H$. This contradiction completes the proof. \qed

\bibliographystyle{amsplain}
\providecommand{\bysame}{\leavevmode\hbox to3em{\hrulefill}\thinspace}
\providecommand{\MR}{\relax\ifhmode\unskip\space\fi MR }
\providecommand{\MRhref}[2]{%
  \href{http://www.ams.org/mathscinet-getitem?mr=#1}{#2}
}
\providecommand{\href}[2]{#2}

\end{document}